\DeclareMathOperator{\Pic}{Pic}
\DeclareMathOperator{\spec}{Spec}
\DeclareMathOperator{\Supp}{{Supp}}
\numberwithin{equation}{section}
\newcommand{\I}{\mathcal I}
\newcommand{\Z}{\mathbb Z}
\newcommand{\E}{\mathcal E}
\newcommand{\ov}{\overline}
\newcommand{\un}{\underline}
\renewcommand{\O}{\mathcal O}
\newtheorem{thm}{Theorem}[section]
\newtheorem{prop}[thm]{Proposition}
\newtheorem{question}[thm]{Question}
\newtheorem{fact}[thm]{Fact}
\theoremstyle{definition}
\newtheorem{defi}[thm]{Definition}
\theoremstyle{remark}
\newtheorem{rem}[thm]{Remark}
\newtheorem{convention}[thm]{}
\newcommand{\mgnb}{\overline{\mathcal M}_{g,n}}
\newcommand{\mgb}{\overline{\mathcal M}_{g}}
\newcommand{\Mn}{\mathfrak M_{g,n}}
\newcommand{\Mnx}{\mathfrak C_{g,n}}
\newcommand{\JM}{{\mathfrak J}_{\mathfrak M}}
\newcommand{\JMn}{\overline{\mathfrak J}_{g,n}}
\newcommand{\LMn}{\mathfrak J_{g,n}}
\newcommand{\JacMn}{\overline{\mathfrak Jac}_{g,n}}
\newcommand{\Nn}{\mathfrak N_{g,n}}
\begin{document}

 \title[{Universal N\'eron models for Jacobians of curves}]{Universal N\'eron models for Jacobians of curves with marked points}
\author{Margarida Melo}
\address{
Dipartimento di Matematica e Fisica, Universit\`a Roma Tre,
Largo San L. Murialdo 1 - 00146 Rome (Italy)} \email{melo@mat.uniroma3.it}
\address{CMUC and Mathematics Department of the University of Coimbra,
Apartado 3008, EC Santa Cruz 3001 - 501 Coimbra (Portugal)} \email{mmelo@mat.uc.pt}

\begin{abstract}
We consider the problem of constructing universal N\'eron models for families of curves with sections. By applying a construction of the author of universal compactified Jacobians over the moduli stack of reduced curves with markings and a result by J. Kass, we get a positive answer for smooth families of curves with planar singularities over Dedekind schemes.
\end{abstract}

\keywords{Universal compactified Jacobians, N\'eron models}
\subjclass{Primary 14H10; Secondary 14H40}

\maketitle


\section{Introduction}

Let $f:X\to B$ be a family of curves over a Dedekind scheme $B$ and assume that there is a dense open subscheme $U\subseteq B$ of $B$ such that $X_U$ is smooth over $U$. Let $J(X_U)$ be the relative Jacobian of $X_U\to U$.  The problem of understanding which would be good models for $J(X_U)$ over $B$ is of course very classical and there have been different approaches to attack it along the years.
One of these is the so-called N\'eron model, constructed by Andr\'e N\'eron in \cite{neron}; it is a smooth and separated model for $J(X_U)$ over $B$, uniquely determined by the so-called \textit{N\'eron mapping property} (see \ref{S:Neron}). Actually, N\'eron's original work  applies more generally to models of abelian varieties and in the case of Jacobians the situation is better understood thanks to work of Raynaud in \cite{raynaud}, who described the N\'eron model of $J(X_U)$ as the biggest separated quotient of the relative Jacobian of the family (see \ref{S:neronJacobians}).

The N\'eron model is therefore something intrinsic to the family so it is natural to ask the following questions:
\begin{itemize}
\item Do N\'eron models glue together as the families of curves change?
\item Is there a natural compactification of the N\'eron model and do these compactifications glue together as the families change?
\item Is there a modular description of the N\'eron model (and of its compactification)?
\end{itemize}

Let us focus on the case of families of reduced curves with marked points. Then the above questions could be better reformulated as follows.

\begin{question}\label{Q:int}
Let $\Mn$ be the moduli stack of reduced curves of given genus $g$  and with $n$ distinct markings.
Is there a modular algebraic stack $\Nn$ endowed with a map onto $\Mn$ such that $\Nn$ is a universal N\'eron model for the Jacobians of families of curves in $\Mn$?
Moreover, can we describe a modular compactification of $\Nn$ over $\Mn$?
\end{question}

By  a \textit{universal N\'eron model} we mean an algebraic stack $\Nn$ endowed with a map $\pi$ onto $\Mn$ such that given a family of curves $(f:X\to B; \sigma_1,\dots, \sigma_n)\in \Mn(B)$ over a Dedekind scheme $B$, which is smooth over a dense open subset $U\subseteq B$, the pullback of $\Nn\to\Mn$ via the moduli map of the family $\mu_f:B\to\Mn$ is isomorphic to the N\'eron model of $J(X_U)$ over $B$:
\begin{equation*}
\xymatrix{
{N(J(X_U))\cong \mu_f^*(\Nn)}\ar[r] \ar[d]& {\Nn}\ar[d]_{\pi}\\
{B}\ar[r]_{\mu_f} & {\Mn}.
}
\end{equation*}

A different possibility for extending $J(X_U)$ over $B$ is to take a compactified Jacobian of the family $f$. This is a proper model of $J(X_U)\to U$ and in the case when $f$ has non-integral fibers, there are a number of different possibilities for constructing such an object.
It is of course natural to try to understand the relation between compactified Jacobians and N\'eron models.
In particular, there are universal modular compactifications of Jacobians so it is expected that an answer to Question \ref{Q:int} above can be obtained via one of these universal compactified Jacobians.

In this note, we will show that the universal compactified Jacobians constructed by the author in \cite{melo2} (following the constructions by Esteves in \cite{est1}) do indeed give a positive answer to the above question in the case of families of reduced curves with at least one section and such that the total space of the family $X$ is regular.
More precisely, given a polarization on $\Mn$, i.e., a vector bundle $\E$ with integral slope in its universal family $\Mnx$, (see Definition \ref{D:polstack}),  and a section $\sigma$ of $\Mnx\to \Mn$, one can define the moduli stack $\JMn^{\E,\sigma}$ parametrizing families of reduced marked curves together with torsion-free rank-1 simple sheaves which are $(\E,\sigma)$-quasistable (see \ref{C:stability} and \ref{C:stacknotation}). This is an algebraic stack endowed with a proper and representable morphism onto $\Mn$. Consider also $\LMn^{\E,\sigma}$ to be the substack of $\JMn^{\E,\sigma}$ parametrizing invertible sheaves. Then in Theorem \ref{T:main} below we show that we can take the universal N\'eron model $\Nn$ to be equal to $\LMn^{\E,\sigma}$, for any  $\E$ and $\sigma$. A modular compactification for $\Nn$ is then naturally given by taking $\JMn^{\E,\sigma}$.

\subsection{Previous results}

The relation between N\'eron models of Jacobians of smooth curves and compactified Jacobians of theirs degenerations is, as we mentioned above, quite natural to investigate, and therefore there has been some work trying to understand different aspects of it, including universal versions of the problem as the one we formulate in Question \ref{Q:int}.

Consider the case of a family of curves $f:X\to B$ over a Dedekind scheme $B$ such that $f$ is smooth over a dense open subset $U\subset B$. Then one can consider a compactified Jacobian $\ov J$ of the family, which will be proper but possibly singular, and take the locus of line bundles $J$ inside $\ov J$. The question is to understand the relation between $J$ and $N(J(X_U))$. Actually, in the literature, N\'eron models have also been used to understand the relation among different compactifications of the Jacobians: according to the terminology introduced by Caporaso in \cite{capNT}, a compactification  is said to be of {\it N\'eron type} if $J$ and $N(J(X_U))$ are isomorphic.
The most general result in this direction so far is due to Jesse Kass who showed in \cite{kass} that $J$ is actually isomorphic to $N(J(X_U))$ in the case when the total space of the family $X$ is smooth and if $\ov J$ is either any fine and proper compactified Jacobian of Esteves or any Simpson's compactified Jacobian associated to a non-degenerate polarization (see Fact \ref{F:kass} and Remark \ref{R:simpson}).
In the case of nodal curves, the question was already considered by Oda and Seshadri in \cite{OS} with respect to their compactification.
In the case of Esteves' compactification, Kass result was previously known for nodal curves by work of \cite{Busonero}  and by work of the author together with Viviani in \cite{MV} for compactified Jacobians associated to non-degenerate polarizations, both for Esteves' and Simpson's compactifications.
The techniques used by these last authors are essentially of combinatorial nature and  differ from the ones by Kass, whose results apply only when the moduli problem is fine.

A universal approach to the problem as we formulated in Question \ref{Q:int} was first studied by Lucia Caporaso in \cite{capN} for certain degrees and by the author in \cite{meloZ} in general in the case of Caporaso's \textit{Balanced Picard stack of degree $d$} $\ov{\mathcal P}_{d,g}$ over the moduli stack of stable curves $\ov{\mathcal M}_g$. See \ref{S:cap} for a detailed account on the relation of these results and the results obtained in the present paper.

More recently, in \cite{holmes}, David Holmes considered the problem of constructing a universal N\'eron model for the moduli stack of stable curves with marked points $\ov{\mathcal M}_{g,n}$, i.e., a partial compactification of the universal Jacobian over $\ov{\mathcal M}_{g,n}$ satisfying the usual properties of N\'eron models, in particular admitting a group structure and satisfying a suitable universal property.
In this situation, the author actually shows that such an object can not exist over $\ov{\mathcal M}_{g,n}$ but rather over a universal base-change $\widetilde{\mathcal M}_{g,n}\to\ov{\mathcal M}_{g,n}$ of it (see \ref{S:holmes}).

A result of the same flavour was recently obtained by Alessandro Chiodo in \cite{chiodo}, who constructed a group scheme $\Pic_g^{0,l}$ over the Abramovich-Vistoli moduli stack of $l$-twisted stable curves $\ov{\mathcal M}_g^l$ with the property that $\Pic_g^{0,l}$ is a universal N\'eron model for curves in $\ov{\mathcal M}_g$ (see \ref{S:chiodo}).

We are not able to endow our universal compactified Jacobian stacks with a group structure as Holmes and Chiodo do, however it does have the property of yielding universal N\'eron models for families of curves  with worse singularities rather then just nodal. Moreover, it is indeed defined over $\Mn$ rather than over a modification of it, as it is the case in Holmes' and Chiodo's constructions as we mentioned above.

\section{Preliminaries and notation}

We work over an algebraically closed field $k$ (of arbitrary characteristic).


\subsection{Curves}

\begin{convention}\label{C:curves}
A \textbf{curve} (over $k$) is  a reduced projective scheme over $k$ of pure dimension $1$, assumed to be connected.

Given a scheme $T$, a \textbf{family of curves} is a flat and projective morphism $f:X\to T$ whose geometric fibers are curves.
\end{convention}

\begin{convention}
A \textbf{subcurve} $Z$ of a curve $X$ is a closed $k$-scheme $Z \subseteq X$ that is reduced  and of pure dimension $1$.  We say that a subcurve $Z\subseteq X$ is non-trivial if $Z\neq \emptyset, X$.
 \end{convention}

\begin{convention}
A curve $X$ is called \textbf{Gorenstein} if its dualizing sheaf $\omega_X$ is a line bundle.

Given a subcurve $Z\subseteq X$ of a Gorenstein curve, we denote by $w_Z$ the degree of $\omega_X$ in $Z$, i.e., $w_Z:=\deg_Z\omega_X$.
\end{convention}

\begin{convention}\label{N:Jac-gen}
Given a curve $X$, the \textbf{generalized Jacobian} of $X$, denoted by $J(X)$,
is an algebraic group whose $k$-valued points are line bundles on $X$ of multidegree $\un 0$ (i.e. having
degree $0$ on each irreducible component of $X$) together with the multiplication given by the tensor product.
\end{convention}

\begin{convention}\label{C:relativeJac}
Let $f:X\to T$ be a family of curves.
Consider the functor
\begin{equation}\label{E:func-Jbar}
{\mathbb J}_f : \{{\rm Schemes}/T\}  \to \{{\rm Grp}\}
\end{equation}
defined to be the \'etale sheafification of the functor which associates to a $T$-scheme $Y$ the set $\Pic(Y_T)$ of isomorphism classes of line bundles on $Y_T:=X\times _T Y$.

Under our hypothesis, the functor $\mathbb J_f$ is representable by an algebraic space (\cite[Thm. 7.3]{artin}) that we will denote with $J_f$ and that is called the relative Jacobian of the family. The fibers of $J_f$ are representable by smooth group schemes locally of finite type and $J_f$ itself is always relatively finitely presented and formally smooth.
\end{convention}


\subsection{Sheaves}\label{simple sheaves}

\begin{convention}
A coherent sheaf $I$ on a curve $X$ is said to be:
\begin{enumerate}[(i)]
\item \emph{rank-$1$} if $I$ has generic rank $1$ at every irreducible component of $X$;
\item \emph{torsion-free} if $\Supp(I)=X$ and every non-zero subsheaf $I'\subseteq I$ is such that $\dim \Supp(I')=1$;
\item \emph{simple} if ${\rm End}_k(I)=k$.
\end{enumerate}

\noindent Note that if $L$ is a line bundle on a curve $X$ then it is clearly simple, rank-$1$ and torsion-free.
\end{convention}

 \begin{convention}
 Let $f:X\to T$ be a family of curves and $I$ a $T$-flat coherent sheaf on $X$.
We say that $I$ is relatively torsion-free (resp. rank 1, resp. simple) over $T$ if the fiber of $I$ over $t$, $I(t)$, is torsion-free
(resp. rank 1, resp. simple) for every geometric point $t$ of $T$.
\end{convention}

\begin{convention}
Given a rank-$1$ coherent sheaf $I$ on $X$, its degree $\deg(I)$ is defined by $\deg(I):=\chi(I)-\chi(\O_X)$, where $\chi(I)$ (resp. $\chi(\O_X)$) denotes the Euler-Poincar\'e characteristic of $I$ (resp. of the trivial sheaf $\O_X$).

\noindent For a subcurve $Y$ of $X$ and a torsion-free sheaf $I$ on $X$, the restriction $I_{|Y}$ of $I$ to $Y$ is not necessarily a torsion-free sheaf on $Y$; denote by $I_Y$ the maximum torsion-free quotient of $I_{|Y}$. Notice that $I_Y$ is torsion-free rank-$1$.
We let $\deg_Y (I)$ denote the degree of $I_Y$ on $Y$, that is, $$\deg_Y(I) := \chi(I_Y )-\chi(\O_Y).$$
\end{convention}

\subsection{N\'eron models}\label{S:Neron}
Let $B$ be a Dedekind scheme and $A\to U$ a family of Abelian varieties over a dense open subscheme $U\subseteq B$.
We denote by $N(A)$ the N\'eron model of $A$ over $B$,  which is a separated $B$-smooth group scheme $N(A)\to B$, extending $A\to U$. Recall that $N(A)$ is uniquely determined by the following universal property (the N\'eron mapping property): every $U$-morphism $u: Z_U\to A$ defined on some scheme $Z$ smooth over $B$ admits a unique extension to a $B$-morphism $u: Z \to N(A)$. (see \cite{neron} and \cite[Def. 1]{BLR}).

In what follows we will be particularly interested in the case when $A=J$ is the relative Jacobian of a family of curves $f:X\to U$. We will in this case frequently abuse terminology and refer to $N(J)$ as the N\'eron model of the family of curves rather then the N\'eron model of the Jacobian.

\section{Fine compactified Jacobians for families of curves}

In what follows we will describe the construction due to Esteves in \cite{est1} and based on work by Altman-Kleiman in \cite{AK} of fine compactifications of the relative Jacobians of families of reduced curves.

\subsection{Altman-Kleiman's algebraic space of simple sheaves}

Let $f:X\to T$ be a family of curves (recall that our families are always assumed to have reduced fibers (see \ref{C:curves} above)).
Consider the functor
\begin{equation}\label{E:func-Jbar}
\ov{\mathbb J}_f : \{{\rm Schemes}/T\}  \to \{{\rm Sets}\}
\end{equation}
defined to be the \'etale sheafification of the functor which associates to a $T$-scheme $Y$ the set of isomorphism classes of $T$-flat, coherent sheaves on $Y_T:=X\times _T Y$
which are relatively simple rank-$1$ torsion-free sheaves.

\begin{fact}\label{F:AK-simple}{(\cite{AK},\cite{est1})}
The functor $\ov{\mathbb J}_f$ is represented by an algebraic space $\ov{J}_f$, locally of finite type over $T$ and it satisfies the existence part of the valuative criterion of properness.
\end{fact}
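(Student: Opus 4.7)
The plan is to first establish representability by exhibiting $\ov J_f$ as a free quotient of an open subscheme of a relative Quot scheme, and then to verify the existence part of the valuative criterion by an extension argument for rank-$1$ torsion-free simple sheaves across a DVR.

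First I would stratify the functor as $\ov{\mathbb J}_f = \bigsqcup_{\chi \in \Z} \ov{\mathbb J}_f^{\chi}$ by the Euler characteristic on fibers, reducing to representing each piece by an algebraic space of finite type. After fixing an $f$-ample line bundle $L$ on $X$ and choosing $n \gg 0$, any $I \in \ov{\mathbb J}_f^\chi(t)$ satisfies $H^1(X_t, I \otimes L^n|_{X_t}) = 0$ and $I \otimes L^n|_{X_t}$ is globally generated of fixed $h^0 = N$; such an $I$ is then recovered from a quotient $\O_X^N \twoheadrightarrow I \otimes L^n$ of fixed Hilbert polynomial $P$, i.e.\ from a point of $\mathrm{Quot}_{X/T}(\O_X^N, P)$. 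I would cut out the $GL_N$-invariant open subscheme $Q^{\circ}$ of quotients whose target is relatively simple, rank-$1$, torsion-free, and for which the map is an isomorphism on relative $H^0$; then $\ov J_f^\chi$ should be $Q^{\circ}/PGL_N$. The reason this quotient is an algebraic space is that the $PGL_N$-action is free: the $GL_N$-stabilizer of a quotient $\O_X^N \twoheadrightarrow F$ equals $\mathrm{Aut}(F) = \mathbb{G}_m$ (by simplicity), and this $\mathbb{G}_m$ is exactly the kernel of $GL_N \to PGL_N$. A free action of a smooth affine group scheme has an algebraic-space quotient locally of finite type, giving $\ov J_f^\chi$; assembling over $\chi$ yields $\ov J_f$.

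For the existence part of the valuative criterion, let $R$ be a DVR over $T$ with fraction field $K$ and residue field $k$, and let $I_K$ be a simple rank-$1$ torsion-free sheaf on $X_K$ representing a $K$-point of $\ov J_f$. Writing $j \colon X_K \hookrightarrow X_R$ for the generic inclusion, $j_* I_K$ is quasi-coherent and $R$-torsion-free; by Noetherianity of $X_R$ one picks a coherent subsheaf $I_0 \subseteq j_* I_K$ extending $I_K$, which is then automatically $R$-flat. I would then modify $I_0$ in two phases: first, iteratively replace it by the kernel of its map to the torsion subsheaf of its special fiber, making the special fiber torsion-free (the length of torsion strictly drops, so this terminates); second, if the special fiber is not yet simple, apply elementary transformations driven by non-scalar endomorphisms, possibly after a finite base change $R \subseteq R'$, until simplicity is achieved.

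The principal obstacle is this last step: showing that the procedure terminates with a simple special fiber. Flatness and fiberwise rank-$1$ torsion-freeness follow from fairly standard manipulations, but simplicity is only an open condition on the total space, and a priori non-scalar endomorphisms could persist. Convergence of the elementary transformations requires a bounded invariant that strictly decreases along the process; in the reducible-fiber setting allowed by Convention \ref{C:curves}, this invariant must be analyzed multidegree-by-multidegree, which is the technical heart of the extension from the integral-fiber case originally treated by Altman--Kleiman to Esteves' general setting.
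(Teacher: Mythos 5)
The paper offers no proof of this Fact --- it is imported wholesale from \cite{AK} and \cite{est1} --- so your sketch has to be measured against those sources, and it falls short in two places. The first is in the representability argument. For a curve with several irreducible components, the simple rank-$1$ torsion-free sheaves of \emph{fixed Euler characteristic} $\chi$ do not form a bounded family: the multidegree can be arbitrary, so there is no single $n$ for which every such $I$ has $I\otimes L^n$ globally generated with vanishing $H^1$, no single $N=h^0$, and no single Hilbert polynomial $P$. Hence $\ov J_f^{\chi}$ is not one quotient $Q^{\circ}/PGL_N$ and is not of finite type --- the paper itself records that the $\ov J_f^d$ are neither of finite type nor separated once fibers are non-integral. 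The repair is the one actually used by Altman--Kleiman: cover $\ov{\mathbb J}_f$ by the open subfunctors of those $I$ for which the \emph{given} twist $I\otimes L^n$ is globally generated with vanishing $H^1$, one for each $n$, present each as a free $PGL$-quotient of an open subscheme of a Quot scheme (your freeness argument via $\mathrm{Aut}(F)=\mathbb G_m$ is correct and survives unchanged), and glue; the union is then only locally of finite type, which is what the Fact asserts.

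The second and more serious issue is in the valuative criterion. Your skeleton (coherent extension inside $j_*I_K$, automatic flatness over the DVR, elementary modifications of the special fiber) is the right one, but the two steps you defer are the content of the theorem. Your torsion-removal loop is not obviously terminating: after the modification $I_1=\ker\bigl(I_0\to (I_0\otimes k)/T\bigr)$ the new special fiber is an extension of $T$ by the torsion-free quotient, so its torsion only injects into $T$ and the length need not strictly drop (if the extension splits, nothing improves); one needs either a more careful choice of modification or a saturation argument. More importantly, you explicitly leave open the termination of the simplicity-restoring modifications, and that step is precisely Esteves' theorem: it rests on his characterization of non-simplicity of a torsion-free rank-$1$ sheaf on a reduced connected curve via decompositions $X=Y\cup Z$ with $I\cong I_Y\oplus I_Z$, and on a decreasing invariant extracted from the degrees of the pieces under the corresponding elementary transformations (after a finite base change of the DVR). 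As written, your proposal correctly locates the difficulty but does not close it, so it does not yet constitute a proof of the Fact.
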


The terminology \textit{fine compactification} is due to the fact that $\ov{\mathbb J}_f$ is indeed representable and not simply coarsely representable. In particular, up to taking an \'etale base change, there exists a universal sheaf $\I$ on $X \times \ov J_f$, uniquely determined up to tensor product by an invertible sheaf on the base scheme.

The algebraic space is clearly not of finite type. Indeed, we have a decomposition
$$\ov{J}_f=\coprod_{d\in \mathbb Z}\ov J_f^d,$$
where each $\ov J_f^d$ corresponds to the sub-algebraic space of $\ov J_f$ parametrizing simple sheaves of total degree $d$.
Moreover, in the case when $f$ has fibers which are not integral curves, fixing the total degree is not sufficient and we immediately see that the algebraic spaces $\ov J_f^d$ are neither of finite type nor separated over $T$. In order to get finite type and separated quotients of each $\ov J_f^d$ one has to fix a stability condition on the sheaves and therefore  bound the possible solutions. This has been worked out by Esteves, as we now explain.

\subsection{Semistable simple sheaves}\label{S:semistable}

We start by defining a polarization on a single curve $X$, following \cite[1.2]{est1}.

\begin{defi}\label{D:d-pol}
Let $X$ be a curve of genus $g$. A $d$-\textit{polarization} on $X$ (or shortly a \textit{polarization}) is a vector bundle $E$ on $X$ of rank $r>0$ and degree $r(d-g+1)$.
\end{defi}

Given a polarization $E$ on a curve $X$, Esteves defines in \cite[1.2, 1.4]{est1} the notions of (semi-)stability with respect to $E$ and, moreover, by further fixing a point $p$ on $X$, a notion of $(E,p)$-quasistability, which stands in between semi-stability and stability, as we now explain.

\begin{defi}\label{C:stability}
Fix a $d$-polarization $E$ on a curve $X$ and a smooth point $p\in X$.
A torsion-free rank-$1$ sheaf $I$ on $X$ with $\deg(I)=d$ is said to be
\begin{enumerate}[(i)]
\item $E$-semi-stable or semi-stable with respect to $E$ if for every proper subcurve $\emptyset\neq Y\subsetneq X$, we have
\begin{equation}\label{E:stabCond}
\chi(I_Y)\geq\frac{\deg E_Y}{r};
\end{equation}
\item $E$-stable or stable with respect to $E$ if inequality \eqref{E:stabCond}
holds strictly for every proper subcurve $\emptyset\neq Y\subsetneq X$;
\item $(E,p)$-quasistable, or $p$-quasistable with respect to $E$ if it is semi-stable and if strict inequality holds above whenever $p\in Y$.
\end{enumerate}
\end{defi}

From the definition, it is clear that in order to give a polarization on a curve $X$ it suffices to give a tuple of rational numbers $\un q=\{\un q_{C_i}\}$, one for each irreducible component $C_i$ of $X$, such that $|\un q|:=\sum_i \un q_{C_i}\in \Z$ (see e.g. \cite{MV}). However, since we want to deal with families, it is more convenient to deal with the vector bundles.

\begin{defi}\label{D:non-deg}
A polarization $E$ is said to be \textit{non-degenerate} if for all proper subcurves $Y\subset X$, $\frac{\deg E_Y}r\notin \mathbb Z$, which implies that  inequality \eqref{E:stabCond} is never an equality.
\end{defi}

Notice that, for a non-degenerate polarization $E$ on $X$, the notions of stability, semistability and quasistabily clearly coincide.

\begin{rem}
If we assume that $X$ is Gorenstein, then inequality \eqref{E:stabCond} above is easily seen to be equivalent to the following more usual inequality
$$\deg_Y(I)\geq \frac{\deg E_Y}{r}+\frac{w_Y}2-\frac{|Y\cap \overline{X\setminus Y}|}2,$$ where $|Y\cap \overline{X\setminus Y}|$ denotes the length of the scheme-theoretic intersection of $Y$ with its complementary curve $\overline{X\setminus Y}$.
\end{rem}

\subsubsection{}Let now $f:X\to S$ be a family of curves. Fix a \textit{relative $d$-polarization} for the family, i.e., a vector bundle $E$ on $X$ of rank $r>0$ and relative degree $r(d-g+1)$ over $T$. Fix also a section $\sigma:T\to X$ of $f$ through the $T$-smooth locus of $X$.

\begin{defi}
\begin{enumerate}[(i)]
\item A torsion-free, rank-$1$ sheaf $I$ on $X$ over $T$ is relatively stable
(resp. semistable) with respect to a polarization $E$ over $T$ if $I(t)$ is stable
(resp. semistable) with respect to $E(t)$ for every geometric
point $t$ of $T$.
\item A torsion-free, rank-$1$ sheaf $I$ on $X$ over $T$ is relatively $\sigma$-quasistable
with respect to $E$ over $T$, or simply $(E,\sigma)$-quasistable if $I(t)$ is $\sigma(t)$-quasistable with respect to $E(t)$ for every
geometric point $t$ of $T$.
\end{enumerate}
\end{defi}

Denote by $\ov{\mathbb J}_f^{E,ss}$ (resp. $\ov{\mathbb J}_f^{E,s}$, resp. $\ov{\mathbb J}_f^{E, \sigma}$) the subfunctors of $\ov{\mathbb J}_f$ parametrizing (relatively) $E$-semistable (resp. $E$-stable, resp. $(E, \sigma)$-quasistable) sheaves. Then we have the following result due to Esteves  (see \cite[Theorem A]{est1}):

\begin{fact}\label{F:esteves}
The moduli functors of relatively $E$-semistable (resp. $E$-stable, resp. $(E, \sigma)$-quasistable) simple torsion-free $T$-flat sheaves over $X$,   $\ov{\mathbb J}_f^{E,ss}$ (resp. resp. $\ov{\mathbb J}_f^{E,s}$, resp. $\ov{\mathbb J}_f^{E, \sigma}$), are representable by algebraic spaces $\ov J_f^{E,ss}$ (resp. $\ov J_f^{E,s}$, resp. $\ov J_f^{E, \sigma}$) of finite type over $T$. Moreover,
\begin{enumerate}[(i)]
\item $\ov J_f^{E,ss}$ is universally closed over $T$;
\item $\ov J_f^{E,s}$ is separated over $T$;
\item $\ov J_f^{E, \sigma}$ is proper over $T$.
\end{enumerate}
\end{fact}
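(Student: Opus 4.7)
The plan is to cut out $\ov J_f^{E,ss}$, $\ov J_f^{E,s}$, and $\ov J_f^{E,\sigma}$ as open sub-algebraic spaces of the Altman--Kleiman space $\ov J_f$ of Fact \ref{F:AK-simple}, and then to verify the three properness properties via the valuative criterion. First, since the condition on $\chi(I)$ coming from $Y=X$ in \eqref{E:stabCond} is built into the definition of the polarization, all three subfunctors land in a single component $\ov J_f^d$, where $d=\deg E/r + g-1$. The defining inequalities involve the Euler characteristics $\chi(I_Y)$ of the maximal torsion-free quotients along geometric subcurves $Y$; these vary upper-semicontinuously in $t\in T$, so the strict/non-strict inequalities of Definition \ref{C:stability} define open subfunctors of $\ov J_f^d$, represented by open sub-algebraic spaces.

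Second, I would establish boundedness, from which finiteness of type will follow. On a geometric fiber $X_t$, for any subcurve $Y$ semistability gives $\chi(I_Y)\ge \deg E(t)_Y/r$, and applying the same bound to the complementary subcurve together with the additivity relation $\chi(I)=\chi(I_Y)+\chi(I_{\overline{X_t\setminus Y}})-\chi(I_{Y\cap \overline{X_t\setminus Y}})$ produces matching upper bounds. Hence the finite tuple of multi-Euler characteristics of a semistable $I$ is constrained to a finite set; for each fixed such tuple the family of torsion-free rank-$1$ sheaves is bounded, and this boundedness is inherited in families over $T$.

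Third, I would verify the properness statements over a DVR $R$ with fraction field $K$. For \textbf{(ii)}, separatedness of $\ov J_f^{E,s}$, two flat stable extensions $I_1,I_2$ of a common sheaf on $X_K$ differ by an $R$-modification supported on the closed fiber $X_0$; a nontrivial such modification would realize equality in \eqref{E:stabCond} for some proper subcurve $Y\subset X_0$ and one of the two sheaves, contradicting strict stability. For \textbf{(i)}, universal closedness of $\ov J_f^{E,ss}$, the existence clause of the valuative criterion from Fact \ref{F:AK-simple} produces some extension $I$ over $R$; if $I$ fails semistability on $X_0$ along a destabilizing subcurve $Y$, one performs the elementary modification of $I$ along $Y$. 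A suitable non-negative integral destabilizing defect strictly decreases under such modifications, and the process terminates in a semistable extension. For \textbf{(iii)}, existence of a quasistable extension follows by taking a semistable extension from (i) and applying further elementary modifications along subcurves containing $\sigma(t)$ to upgrade the non-strict inequality there to a strict one; uniqueness follows by a variant of the argument in (ii), using the fact that $\sigma$ meets the smooth locus of $X_0$ to force the would-be modification to be trivial.

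The main obstacle I expect is the combined valuative criterion for $(E,\sigma)$-quasistability: one must simultaneously establish that the elementary-modification procedure canonically reaches a quasistable extension and that any two such extensions coincide. The uniqueness step is delicate because semistable extensions are in general non-unique (this is precisely why (i) gives only universal closedness, not properness), and the role of the section $\sigma$ passing through the smooth locus must be invoked carefully to pin down the ambiguity along every destabilizing subcurve of $X_0$.
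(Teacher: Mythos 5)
This statement is quoted in the paper as a \emph{Fact} and is not proved there: it is Theorem~A of Esteves \cite{est1}, and the paper simply cites it. So there is no internal proof to compare against; what can be said is that your outline reproduces, at the level of strategy, what Esteves actually does: (1) the (semi/quasi)stability conditions cut out open, finite-type subspaces of the Altman--Kleiman space $\ov J_f^d$; (2) boundedness comes from the fact that the semistability inequalities applied to a subcurve and to its complement pin the tuple of partial Euler characteristics into a finite set; (3) the valuative criteria are checked over a DVR by relating two extensions of a sheaf on $X_K$ via modifications supported on the closed fiber, with strict inequalities ruling out nontrivial modifications (separatedness of the stable locus), a semistable-reduction process via elementary modifications along destabilizing subcurves (universal closedness), and the section $\sigma$ through the smooth locus breaking the residual ambiguity (properness of the quasistable locus). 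That is the right skeleton, and your identification of the delicate point --- uniqueness of the quasistable extension --- is accurate.

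Two places where your sketch is thinner than it reads. First, openness is not just upper-semicontinuity of a single function $t\mapsto\chi(I_Y(t))$: the collection of subcurves $Y$ of the geometric fibers is not locally constant over $T$, so one must argue (as Esteves does) that the locus where some fiberwise inequality fails is closed, which requires a relative/constructible formulation of the subcurve condition rather than a pointwise semicontinuity statement. Second, throughout the semistable-reduction step you must check that the elementary modifications stay inside the moduli problem, i.e.\ remain flat, torsion-free rank-$1$ and, crucially, \emph{simple}; this is not automatic for a merely semistable output, and the reason the argument closes up is that the reduction process can be steered to land on a $(E,\sigma)$-quasistable sheaf, and quasistable (or stable) sheaves on a connected curve are automatically simple --- a decomposition $I=I_1\oplus I_2$ along complementary subcurves would force equality in \eqref{E:stabCond} on the piece containing $\sigma(t)$. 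You would also need to exhibit the non-negative integral ``defect'' that strictly decreases under the modifications; Esteves does this explicitly, and without it the termination claim in your step (i) is an assertion rather than an argument. None of these is a wrong turn, but they are the places where the actual proof in \cite{est1} does real work that your outline delegates to a phrase.
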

Clearly, we always have inclusions  $\ov J_f^{E,s}\subseteq  \ov J_f^{E,\sigma}\subseteq  \ov J_f^{E,ss}$. If the polarization we start with is non-degenerate (i.e., if the restriction of $E$ to any fiber of $f$ is non-degenerate), then all these schemes actually coincide.

Notice that the ``universal sheaf'' for torsion-free rank-$1$ simple sheaves $\I$ on $X\times \ov J_f$ descends to give universal sheaves for each of the above quotients of $\ov J_f$.

\begin{rem}
It follows from \cite{est1} that if $T=k$ is an algebraically closed field, then $\overline J_f^{E, \sigma}$ is actually a projective scheme.
\end{rem}

\section{(Compactified) Jacobians and N\'eron models}

Let $B$ be a Dedekind scheme over an algebraically closed field $k$ and $f:X\to B$ be a family of curves which is smooth over a dense open subset $U\subseteq B$.

\subsection{N\'eron models of Jacobians}\label{S:neronJacobians}

Consider the relative Jacobian of the family $$J_{f_{|U}}:=J(X_U)\to U,$$ which is a family of Abelian varieties over $U$, and  consider its N\'eron model over $B$, $N(J(X_U))$.
This special situation has been studied in detail by Raynaud in  \cite{raynaud}, who related $N(J(X_U))$ with the relative Jacobian of the whole family $J_f$ defined in \ref{C:relativeJac}.
Recall that the relative Jacobian is locally finitely presented and formally smooth  over the base, however it fails to be separated as long as there are reducible fibers in $f$.

Inside $\mathbb J_f$, consider the subfunctor $\mathbb E_f$ defined to be the scheme-theoretic closure of the identity section. Then $\mathbb E_f$ is representable by an \'etale $T$-group space that we will denote by $E_f$. Then $Q_f:=J_f/E_f$ is the greatest separated quotient of $J_f$ and it coincides with the N\'eron model of $J(X_U)$ over $B$ (see \cite[Theorem 4 in 9.5]{BLR}).

A more concrete description of $N(J(X_U))$ has been given by Caporaso in \cite[Lemma 3.10]{capN} using twisters of the special fiber $X_0$ of $f$.

\subsection{Line bundles loci in compactified Jacobians and N\'eron models}
Let  $\ov J\to B$ be a relative compactified Jacobian of the family $f$. By this we mean a proper algebraic space $\ov J\to B$ which coincides with the relative Jacobian when restricted to the generic fiber.

There is a vast literature trying to relate compactified Jacobains for the family $f$ and the N\'eron model $N(J(X_U))$. In particular, one wonders if a certain locus inside $\ov J$ (e.g., the smooth locus of the morphism $\ov J\to B$) coincides with $N(J(X_U))$.

The following result, due to J. Kass, relates the locus of line bundles inside Esteves compactified Jacobians with $N(J(X_U))$ in the case when the total space of the family $X$ is smooth, and is probably the most general result on the subject obtained so far.

\begin{fact}{\cite[Theorem 1]{kass}}\label{F:kass}
Let $f:X\to B$ be a family of curves with smooth total space $X$ and generic fiber $X_\eta$.
Let $E$ be a polarization on $f$ and $\sigma:B\to X$ a section of $f$. Denote with $J_f^{E,\sigma}$ the locus of line bundles on $\ov J_f^{E,\sigma}$. Then $J_f^{E,\sigma}$ is isomorphic to $N(J(X_\eta))$, the N\'eron model of the generic fiber of the family over $B$.
\end{fact}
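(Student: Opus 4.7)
The plan is to construct a $B$-morphism $\phi: J_f^{E,\sigma}\to N(J(X_\eta))$ via the N\'eron mapping property, and then to prove that $\phi$ is an isomorphism by appealing to Raynaud's description $N(J(X_\eta))\cong J_f/E_f$ recalled in \S\ref{S:neronJacobians}.

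First I would verify that $J_f^{E,\sigma}\to B$ is smooth and separated. Smoothness of the line bundle locus of the relative Jacobian is a consequence of the smoothness of the total space $X$: deformations of a line bundle on a fiber $X_b$ are unobstructed since $H^2(X_b,\O_{X_b})=0$, so $J_f\to B$ is formally smooth, and hence so is its open subspace $J_f^{E,\sigma}$. Separatedness is inherited from the properness of $\ov J_f^{E,\sigma}$ (Fact \ref{F:esteves}(iii)), of which $J_f^{E,\sigma}$ is an open subspace. The N\'eron mapping property then yields a unique $B$-morphism
\[
\phi: J_f^{E,\sigma}\longrightarrow N(J(X_\eta))
\]
extending the identity on the generic fiber $J(X_\eta)$.

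The second step is to prove that $\phi$ is an isomorphism by a fiberwise analysis. Using the identification $N(J(X_\eta))\cong J_f/E_f$, the morphism $\phi$ is nothing but the composition of the open immersion $J_f^{E,\sigma}\hookrightarrow J_f$ with the quotient $J_f\to J_f/E_f$. Over a closed point $b\in B$, the geometric fiber of $E_f$ acts on $\Pic(X_b)$ by twisting with restrictions of line bundles of the form $\O_X(\sum n_iC_i)$, where $\{C_i\}$ are the irreducible components of $X_b$; this action is well-defined precisely because the $C_i$ are Cartier divisors on the smooth total space $X$. Thus proving that $\phi$ is an isomorphism reduces to the combinatorial claim that on every geometric special fiber the $(E(b),\sigma(b))$-quasistability condition selects exactly one representative from each twister orbit on $\Pic(X_b)$. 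Given such bijectivity on geometric fibers, smoothness of both sides and the generic identification force $\phi$ to be \'etale and bijective, and hence an isomorphism of algebraic spaces over $B$.

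The main obstacle is precisely this combinatorial step: existence and uniqueness of an $(E(b),\sigma(b))$-quasistable line bundle in every twister orbit on $X_b$. For nodal curves this is classical, going back to Caporaso \cite{capN} and Oda--Seshadri \cite{OS} and later exploited systematically in Melo--Viviani \cite{MV}. In the present setting the smoothness of $X$ forces each $X_b$ to have only planar singularities, but allows these to be arbitrary, so the numerical analysis of quasistability must be carried through to this broader class of reduced curves. The role of the section $\sigma$ is essential here: without it, one only obtains a finite-to-one correspondence between components of the special fiber, whereas quasistability with respect to $\sigma$ is what singles out a canonical representative in each twister class.
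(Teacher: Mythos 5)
This statement is quoted in the paper as an external result (Theorem~1 of \cite{kass}); the paper gives no proof of it, so there is no internal argument to compare yours against. Judged on its own terms, your outline follows the standard strategy (and is close in spirit to the nodal-case arguments of \cite{capN} and \cite{MV}): use smoothness and separatedness of $J_f^{E,\sigma}$ to invoke the N\'eron mapping property, identify $N(J(X_\eta))$ with $J_f/E_f$ \`a la Raynaud, and reduce to showing that quasistability picks out exactly one line bundle in each twister orbit on the special fiber. Two small points before the main one: the formal smoothness of $J_f$ holds for any family of curves (as recalled in \ref{C:relativeJac}) and does not need $X$ smooth --- what the regularity of $X$ buys is that the components $C_i$ of the special fiber are Cartier, so that $E_f$ is generated by the twisters $\O_X(\sum n_iC_i)$ and every line bundle on $X_\eta$ extends to a line bundle on $X$; and the generic fiber of $J_f^{E,\sigma}$ is the degree-$d$ component of $\Pic(X_\eta)$, a torsor under $J(X_\eta)$ rather than $J(X_\eta)$ itself, so you should trivialize it using the section $\sigma$ (twisting by $\O(d\sigma)$) before applying the N\'eron mapping property.

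The genuine gap is the one you flag yourself: the claim that each twister orbit of line bundles on a geometric special fiber contains exactly one $(E(b),\sigma(b))$-quasistable representative. This is not a peripheral verification --- it \emph{is} the theorem, and leaving it as ``the main obstacle'' means the proof is not complete. It can, however, be closed with inputs already quoted in this paper, which is essentially what Kass does. Uniqueness is formal: two $(E,\sigma)$-quasistable line bundles on $X$ in the same twister orbit give two $B$-sections of the separated (indeed proper) space $\ov J_f^{E,\sigma}$ of Fact~\ref{F:esteves}(iii) agreeing over $U$, hence they coincide; no combinatorics is needed. Existence is Esteves's semistable reduction theorem (\cite[Theorem~32]{est1}), which is proved for arbitrary families of reduced curves, not only nodal ones: given a line bundle $L$ on $X$ with $X$ regular, there exist integers $n_i$ such that $L(\sum n_iC_i)$ is $(E,\sigma)$-quasistable. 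Granting these two facts, the map $J_f^{E,\sigma}\to J_f/E_f$ is \'etale (being the restriction of the quotient by the \'etale group space $E_f$), surjective and universally injective, hence an isomorphism, as you indicate. So your architecture is sound, but the argument only becomes a proof once the existence--uniqueness step is attributed to (or re-proved from) \cite{est1} rather than left open.
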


\begin{rem}\label{R:simpson}
Fact \ref{F:kass} above is also proved in loc. cit. to hold for a Simpson compactified Jacobian whenever it is associated to a non-degenerate polarization (see Definiton \ref{D:non-deg}).
Notice that this last property implies that stability and semistability coincide and therefore that Simpson's compactified Jacobian is also a fine moduli space in this case.
\end{rem}

\begin{rem}
The fact that the locus of line bundles of fine compactified Jacobians for different polarizations is isomorphic to the N\'eron model of the family does not imply that the compactifications themselves are isomorphic. Indeed, an example of different non-degenerate polarizations over nodal curves yielding non-isomorphic compactified Jacobians was recently exhibited in \cite[Example 3.11]{MRV}.
\end{rem}

In the special case when the family $f$ has only nodal singularities, the previous result was known by work of the author together with Viviani in \cite{MV}. In loc. cit., the authors also describe a stratification of the whole compactification $\ov J_f^{E,\sigma}$ in terms of N\'eron models of partial normalizations of the special fiber of $f$.
In this case, the fiber of $N(J(X_U))$ over the special fiber of $B$ is known to be isomorphic to a disjoint sum of copies of the generalized Jacobian of the special fiber of $f$, so the result acquires a combinatorial flavor (indeed N\'eron models are in this case determined by the so-called degree class group, which is defined using the dual graph of the special fiber of $f$ and that has many interesting incarnations in the literature: see for instance the introduction of \cite{BMS} for an account of these different appearances).

\section{Universal fine compactifictified Jacobians over curves with marked points}

For any non-negative integer $n$, let $\Mn$ be the moduli stack parametrizing $n$-marked reduced curves of given genus $g$. Sections of $\Mn$ over a scheme $T$ consist of families $f:X\to T$ of reduced curves, together with $n$ disjoint sections $\sigma_1,\dots, \sigma_n:T \to X $ which factor through the smooth locus of the family. Recall that $\Mn$ is algebraic (see e.g. \cite[Cor. 1.5]{hall}).

Consider now the stack $\JacMn$ whose $T$-sections consist of pairs
$(\xymatrix{
X \ar[r]_{f} & T \ar @/_/[l]_{\sigma_i}
}
, F)$
where
\begin{itemize}
\item $(f:X\to T;\sigma_1,\dots,\sigma_n)\in\Mn(T)$;
\item $F$ is a $T$-flat coherent sheaf over $X$ whose fibers over $T$ are torsion-free rank-1 simple sheaves.
\end{itemize}
Morphisms between two such pairs
$(\xymatrix{
X \ar[r]_{f} & T \ar @/_/[l]_{\sigma_i}
}
, F)$
and
$(\xymatrix{
X' \ar[r]_{f'} & T' \ar @/_/[l]_{\sigma'_i}
}
, F')$
are given by Cartesian diagrams
\begin{equation}\label{D:morphism}
\xymatrix{
{X} \ar[r]^{\bar g} \ar[d]^{f} & {X'} \ar[d]_{f'}\\
{T} \ar[r]_{g}  \ar @{->} @/^/[u]^{\sigma_i} & {T'} \ar @{->} @/_/[u]_{\sigma'_{i}}
}
\end{equation}
commuting with the sections, together with an isomorphism
$\beta: F\to\bar g^* F'$.

There is clearly an action by $\mathbb G_m$ on $\JacMn$ and we define $\JMn$ to be the $\mathbb G_m$-rigidification of $\JacMn$.

Denote by $\pi$ the natural forgetful morphism
$$\pi:\JMn\to \Mn$$
given by forgetting the sheaf.

The following is Theorem A in \cite{melo2}:

\begin{fact}\label{F:Th1}
The stack $\JMn$ is representable over $\Mn$. In particular, $\JMn$ (and also $\JacMn$) is algebraic.
\end{fact}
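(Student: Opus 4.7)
The plan is to reduce to the Altman--Kleiman--Esteves Fact \ref{F:AK-simple}. I want to show that the forgetful morphism $\pi:\JMn\to\Mn$ is representable by algebraic spaces; algebraicity of $\JMn$ then follows automatically since $\Mn$ is algebraic. Algebraicity of $\JacMn$ will follow because $\JacMn\to\JMn$ is the $\mathbb{G}_m$-gerbe coming from the rigidification.

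First I would pick a scheme $T$ together with a morphism $T\to \Mn$, corresponding to a family $(f:X\to T;\sigma_1,\dots,\sigma_n)$. The 2-fiber product $\JacMn\times_{\Mn}T$ is, by construction, the stack over $T$ whose sections are $T$-flat coherent sheaves $F$ on $X$ with simple, rank-$1$, torsion-free fibers, and whose morphisms are sheaf isomorphisms. Because $\mathrm{End}(F(t))=k$ on each geometric fiber, the inertia along $T$ is exactly $\mathbb{G}_m$ (scalar automorphisms), so this fiber product is a $\mathbb{G}_m$-gerbe over its rigidification. Since rigidification commutes with base change, the rigidification of $\JacMn\times_{\Mn}T$ is canonically identified with $\JMn\times_{\Mn}T$. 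Next I would match this rigidification with $\ov J_f$: by construction the functor $\ov{\mathbb{J}}_f$ is the étale sheafification of the functor of isomorphism classes of $T$-flat simple sheaves, and étale-sheafifying the presheaf of isomorphism classes is precisely what quotienting by the $\mathbb{G}_m$-inertia produces. Hence $\JMn\times_{\Mn}T\simeq \ov J_f$, which by Fact \ref{F:AK-simple} is an algebraic space locally of finite type over $T$. This gives representability of $\pi$.

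With $\pi$ representable I conclude algebraicity of $\JMn$ in the standard way: pulling back a smooth atlas $U\to\Mn$ by a scheme yields an algebraic space $U\times_{\Mn}\JMn$ which smoothly surjects onto $\JMn$, and the diagonal of $\JMn$ is representable since it fits in a Cartesian square with the representable diagonal of $\Mn$ via $\pi$. Finally $\JacMn$ is algebraic because the rigidification morphism $\JacMn\to\JMn$ is a $\mathbb{G}_m$-gerbe, and $\mathbb{G}_m$-gerbes over algebraic stacks are algebraic (equivalently, $B\mathbb{G}_m$ is algebraic and banded gerbes descend algebraicity). The main technical point that needs careful justification is the identification $\JMn\times_{\Mn}T\simeq \ov J_f$, i.e.\ that rigidification commutes with base change along $T\to\Mn$ and that its output coincides with the Altman--Kleiman étale-sheafified functor; both hinge on the freeness of the $\mathbb{G}_m$-action, which is exactly the simplicity hypothesis on the sheaves.
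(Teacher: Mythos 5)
Your proof is correct and is essentially the paper's own approach: the Fact is imported from Theorem A of \cite{melo2}, whose argument (as recalled in the proof of Proposition \ref{P:basechange1}) consists precisely of identifying the fiber $\JMn\times_{\Mn}T$ over a family $f:X\to T$ with the Altman--Kleiman--Esteves algebraic space $\ov J_f$ of Fact \ref{F:AK-simple}, with simplicity of the sheaves ensuring that the $\mathbb G_m$-rigidification has trivial residual inertia and hence agrees with the \'etale sheafification $\ov{\mathbb J}_f$. The only wording to adjust is ``freeness of the $\mathbb G_m$-action'': the point is rather that the automorphism groups are \emph{exactly} the scalars, so that rigidifying leaves no inertia.
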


Let $(f:X\to T; \sigma_1,\dots, \sigma_n)\in\Mn(T)$ be a family of marked curves and let $\mu_f:T\to \Mn$ be the corresponding modular map on $\Mn$.
Denote by $\mu_f^*(\JMn)$  the base-change of $\pi:\JMn\to \Mn$ via $\mu_f$, as indicated in the diagram below.
\begin{equation*}
\xymatrix{
{\mu_f^*(\JMn)} \ar[r]\ar[d] & {\JMn} \ar[d]_{\pi}\\
{T} \ar[r]_{\mu_f} & {\Mn}
}
\end{equation*}
 Moreover, from the proof of Theorem A in \cite{melo2}, we have.

\begin{prop}\label{P:basechange1}
Notation as above. Then $\mu_f^*(\JMn)$ is representable by the fine moduli space $\overline J_f$ of torsion-free rank-1 simple sheaves for the family $f:X\to T$ introduced in Fact \ref{F:AK-simple} above.
\end{prop}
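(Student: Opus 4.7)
My approach is to unwind both sides via their functors of points and match them definitionally.

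First, I would describe $\mu_f^*(\JMn)$ by its functor of points over $T$: for a $T$-scheme $g\colon S\to T$, its $S$-sections form the $2$-fiber product $\JMn(S) \times_{\Mn(S)} \{g\}$. Recall that $\JMn$ is defined as the $\mathbb{G}_m$-rigidification of $\JacMn$, so an object of $\JMn(S)$ is a pair $((X'\to S;\sigma'_i), F)$ with $F$ a $T$-flat relatively simple rank-$1$ torsion-free sheaf on $X'$, and the rigidification identifies the central copy of $\mathbb{G}_m$ in each automorphism group with the identity (that copy being the scalar action on $F$). Specifying a $2$-isomorphism to $\mu_f\circ g$ in $\Mn(S)$ rigidifies the curve, i.e.\ identifies $X'\to S$ with the pullback $X\times_T S\to S$ up to unique isomorphism; hence the remaining data is exactly a simple rank-$1$ torsion-free sheaf $F$ on $X\times_T S$, with morphisms being sheaf isomorphisms modulo scalar multiplication.

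Second, I would match this groupoid with the functor $\overline{\mathbb{J}}_f(S)$ from \eqref{E:func-Jbar}. The key observation, which comes directly from simplicity, is $\mathrm{Aut}(F)=\mathbb{G}_m$; thus after the rigidification all automorphisms are killed, and the resulting sheaf of isomorphism classes on $T$-schemes is precisely the \'etale sheafification of $S\mapsto \{F \text{ simple rank-}1 \text{ torsion-free on } X\times_T S\}/\!\sim$, which by definition is $\overline{\mathbb{J}}_f$. Representability by the algebraic space $\overline{J}_f$ then follows from Fact \ref{F:AK-simple}, giving $\mu_f^*(\JMn)\cong \overline{J}_f$ as $T$-algebraic spaces.

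The one point requiring care is verifying that the rigidification on the stack side matches the \'etale sheafification on the functor side — equivalently, that $\mu_f^*(\JMn)$ is an algebraic space rather than a stack with residual gerbe structure. This I expect to be the main technical step, but it follows from the combination of (i) the curve being rigidified by the fiber-product condition (no curve automorphisms survive the chosen identification with $X\times_T S$), and (ii) the $\mathbb{G}_m$-rigidification killing exactly the scalars that exhaust $\mathrm{Aut}(F)$ for simple $F$. Once these two facts are checked, the identification of the functor of points with $\overline{\mathbb{J}}_f$ is tautological, and Fact \ref{F:AK-simple} finishes the proof.
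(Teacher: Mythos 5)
Your argument is correct and follows essentially the same route as the paper: the paper's proof simply cites the observation (from the proof of Theorem~A in \cite{melo2}) that the category fibered in groupoids associated to $\ov{\mathbb J}_f$ is equivalent to $\mu_f^*(\JMn)$, and then invokes Fact~\ref{F:AK-simple}, which is exactly the equivalence you verify by hand. Your explicit unwinding --- the fiber product over $\Mn$ rigidifying the curve so that only sheaf automorphisms survive, and the $\mathbb G_m$-rigidification killing precisely $\mathrm{Aut}(F)=\mathbb G_m$ for simple $F$ --- is the content the paper delegates to \cite{melo2}, so there is no gap.
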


\begin{proof}
The fact that $\mu_f^*(\JMn)$ is representable by an algebraic space is a direct consequence of Fact \ref{F:Th1} above.
Now, in the proof of Theorem A in \cite{melo2}, we observed that the category fibered in groupoids associated to the functor of torsion-free rank-$1$ simple sheaves $\ov{\mathbb J}_f$ is equivalent to $\mu_f^*(\JMn)$. This is enough to conclude since from \ref{F:AK-simple} we know that $\ov{\mathbb J}_f$ is represented by $\ov J_f$.
\end{proof}

Let $\Mnx\to \Mn$ be the natural forgetful map from the universal family $\Mnx$ onto $\Mn$.
The following is Definition 3.7 in \cite{melo2}.

\begin{defi}\label{D:polstack}
Let $\E$ be a vector bundle on $\Mnx$.
Then $\E$ is a $d$-polarization on $\Mn$ if it has positive rank $r$ and degree $r(d-g+1)$.
\end{defi}
In fact, given a family of marked curves $(f:X\to T,\sigma_1\dots,\sigma_n)$, the pullback of $\E$ via $\overline \mu_f$, the base-change of $\mu_f$ via $\Mnx\to\Mn$,
\begin{equation*}
\xymatrix{
{X} \ar[r]^{\overline\mu_f} \ar[d]_{f} & {\Mnx} \ar[d]\\
{T} \ar[r]_{\mu_f} & {\Mn}
}
\end{equation*}
is a vector bundle $\E_f:=\overline\mu_f^*(\E)$, which turns out to be a $d$-polarization on the family $f$.

In the same way, given a section $\sigma:\Mn\to \Mnx$ of $\pi$, its pullback via $\mu_f$ gives a section $\sigma_f$ of the family $f:X\to T$.

\begin{convention}\label{C:stacknotation}
Let $\E$ be a $d$-polarization on $\Mn$ and $\sigma:\Mn\to \Mnx$ a section of the forgetful morphism $\Mnx\to \Mn$. We will denote by  $\JMn^{\E,\sigma}$ the open substack of $\JMn$ parametrizing pairs
$$(\xymatrix{
X \ar[r]_{f} & T \ar @/_/[l]_{\sigma_i}
}
, F)$$
such that $F$ is $\sigma_f$-quasistable with respect to $\E_f$.
Denote also by $\JMn^{\E,s}$ (resp. $\JMn^{\E,ss}$) the open substacks of $\JMn$ parametrizing pairs as above where the sheaf $F$ is $\E_f$-stable (resp. semistable).
\end{convention}

Then we have

\begin{fact}{\cite[Theorem B]{melo2}}
\begin{enumerate}[(i)]
\item The stack $\JMn^{\E,ss}$ is universally closed over $\Mn$.
\item The stack $\JMn^{\E,s}$ is separated over $\Mn$.
\item The stack $\JMn^{\E,\sigma}$ is proper over  $\Mn$.
\end{enumerate}
\end{fact}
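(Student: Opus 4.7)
The plan is to deduce (i)--(iii) from Esteves' Fact~\ref{F:esteves} by exploiting that $\pi:\JMn\to\Mn$ is representable (Fact~\ref{F:Th1}) and then pulling back along a smooth atlas of $\Mn$ consisting of families of marked curves.

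First I would verify that the three subcategories in the statement are indeed open substacks of $\JMn$, i.e.\ that relative $\E_f$-semistability, $\E_f$-stability, and $(\E_f,\sigma_f)$-quasistability cut out open conditions in families. This is a local question on $\Mn$, so it reduces to the openness of the corresponding loci inside $\ov J_f$ for an arbitrary family $f$, which is part of Esteves' construction in \cite{est1}; concretely, one exploits that a reduced curve has only finitely many subcurves to test, together with flatness of the sheaves, to see that the inequalities defining the three conditions are all open.

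Next I would upgrade Proposition~\ref{P:basechange1} to the stable loci. Given a family $(f:X\to T;\sigma_1,\dots,\sigma_n)\in\Mn(T)$ with moduli map $\mu_f:T\to\Mn$, I would establish canonical identifications
\begin{equation*}
\mu_f^*(\JMn^{\E,ss})\cong \ov J_f^{E,ss},\qquad \mu_f^*(\JMn^{\E,s})\cong \ov J_f^{E,s},\qquad \mu_f^*(\JMn^{\E,\sigma})\cong \ov J_f^{E,\sigma_f},
\end{equation*}
where $E:=\E_f$ and $\sigma_f$ is the section of $f$ obtained by pulling back $\sigma$. This follows from Proposition~\ref{P:basechange1} together with the compatibility of the (quasi-)stability conditions under pullback: by construction of a polarization $\E$ on $\Mnx$ and of the section $\sigma$ of $\Mnx\to\Mn$, the fibrewise (quasi-)stability of a sheaf relative to $(\E,\sigma)$ on the universal family coincides with its fibrewise (quasi-)stability relative to $(\E_f,\sigma_f)$ on $f$.

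Finally, since being universally closed, separated, or proper for a representable morphism of algebraic stacks is stable under base change and local on the target for the smooth topology, it suffices to check each of these after pulling back along a smooth atlas $T\to\Mn$ by a scheme. The identifications above then translate (i), (ii) and (iii) into the three corresponding assertions of Fact~\ref{F:esteves}. The main point requiring care is the second step: ensuring that the formation of $\E_f$ and $\sigma_f$ from $(\E,\sigma)$ via $\overline\mu_f$ and $\mu_f$ is compatible with the (quasi-)stability conditions on the level of sheaves on $X\times_T \mu_f^*(\JMn)$ versus on $\Mnx\times_{\Mn}\JMn$. Once this functoriality is set up carefully, the descent argument is essentially formal and the three properties drop out at once.
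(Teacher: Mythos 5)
This statement is quoted as a Fact from \cite[Theorem B]{melo2} and is not proved in the present paper; your argument is the correct and standard reduction, and it is essentially the strategy of the cited reference: openness of the three (quasi-)stability loci, the identification of $\mu_f^*$ of each substack with the corresponding Esteves space $\ov J_f^{E,ss}$, $\ov J_f^{E,s}$, $\ov J_f^{E,\sigma_f}$ (cf.\ Propositions \ref{P:basechange1} and \ref{P:basechange2}), and smooth-local-on-target descent of universal closedness, separatedness and properness for the representable morphism $\pi$. No gaps beyond the care you already flag about compatibility of $\E_f$ and $\sigma_f$ with pullback.
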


\begin{rem}
It follows from Propositions 3.6 and  4.4 in \cite{melo2} that the restriction of $\JMn^{\E,\sigma}$ to the stack $\Mn^{lp}$ of curves with locally planar singularities is smooth and irreducible and that its restriction to the stack $\overline{\mathcal M}_{g,n}$ of stable curves is moreover a Deligne-Mumford stack.
\end{rem}

Let us briefly discuss how to specify a number of different $d$-polarizations on $\Mn$.
By taking $\E:=\mathcal O_{\Mnx}^{\oplus r}$, for any $r\in \mathbb Z$, we get a polarization of degree $g-1$. In order to get polarizations of different degrees we can use the sections $\sigma_1,\dots,\sigma_n:\Mn\to \Mnx$.
Then, for any combination of integers $a_i\in\mathbb Z$ summing up to $r(d-g+1)$, we get that
$$\E:=\mathcal O_{\Mnx}(\sum_{i=1}^n a_i\sigma_i)\oplus \mathcal O_{\Mnx}^{\oplus r-1}$$
is a $d$-polarization in $\Mn$.
In the case we are working over the stack $\Mn^{lp}$ then, since we are dealing with Gorenstein curves, it also makes sense to take the relative dualizing sheaf of the universal family morphism, $\Mnx\to\Mn$, which we denote by  $\omega$. Then we can also consider the so-called canonical polarization for each degree $d$, $$\E^{can}:=\omega^{\otimes(d-g+1)}\oplus \mathcal O_{\Mnx^{lp}}^{\oplus 2g-3}.$$

Now, as above, consider the pullback $\mu_f*(\JMn^{\E,\sigma})$ of $\JMn^{\E,\sigma}\to\Mn$ via $\mu_f:T\to\Mn$. Then, analogously to the situation described in Proposition \ref{P:basechange1} above, we have.

\begin{prop}\label{P:basechange2}
Notation as above. Then $\mu_f^*(\JMn^{\E,\sigma})$ is representable by the fine moduli space $\overline J_f^{\E_f,\sigma_f}$ of $(\E_f,\sigma_f)$-quasistable  torsion-free rank-1 simple sheaves for the family $f:X\to T$ introduced in Fact \ref{F:esteves} above.
\end{prop}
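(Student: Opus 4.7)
The plan is to mirror exactly the proof of Proposition \ref{P:basechange1}, exploiting the fact that by construction $\JMn^{\E,\sigma}$ is the open substack of $\JMn$ cut out by the $(\E,\sigma)$-quasistability condition on the universal sheaf. Since base change preserves open immersions, $\mu_f^*(\JMn^{\E,\sigma})$ sits as an open subspace of $\mu_f^*(\JMn)$, and by Proposition \ref{P:basechange1} the latter is represented by the algebraic space $\ov J_f$. What I need to check is that under this identification, the pulled-back open substack corresponds precisely to the open subspace $\ov J_f^{\E_f,\sigma_f}\subseteq \ov J_f$ singled out by Fact \ref{F:esteves}.

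To see this, I would unwind definitions on $T'$-points, for any $T$-scheme $g:T'\to T$. A $T'$-point of $\mu_f^*(\JMn^{\E,\sigma})$ amounts to a $T'$-point of $\mu_f^*(\JMn)$, i.e. by Proposition \ref{P:basechange1} a relatively torsion-free rank-$1$ simple sheaf $F$ on $X\times_T T'$, subject to the additional condition that the corresponding morphism $T'\to \JMn$ factors through $\JMn^{\E,\sigma}$. By Convention \ref{C:stacknotation} this factorization is equivalent to requiring that, at every geometric point $t'$ of $T'$, the fiber $F(t')$ is quasistable with respect to the pair pulled back from $(\E,\sigma)$ along the composition $T'\to T\to \Mn$. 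Naturality of pullback in the Cartesian square defining $\ov\mu_f$, together with a further base change along $g$, identifies this pulled-back pair with the restriction of $(\E_f,\sigma_f)$ to the fibers over $T'$. Hence the quasistability condition defining the substack transports exactly to the relative $(\E_f,\sigma_f)$-quasistability condition defining $\ov{\mathbb J}_f^{E_f,\sigma_f}$.

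This yields an isomorphism of functors $\mu_f^*(\JMn^{\E,\sigma})\cong \ov{\mathbb J}_f^{E_f,\sigma_f}$, and Fact \ref{F:esteves} then gives representability by $\ov J_f^{\E_f,\sigma_f}$, concluding the proof. The only real point to be careful about, and the one that could be considered the main (though mild) obstacle, is the compatibility of pullbacks under iterated base change: one must be sure that pulling back $\E$ first to the universal family over $\Mn$ and then to $X\times_T T'$ gives the same vector bundle as pulling back $\E_f$ from $X$ to $X\times_T T'$, and similarly that $\sigma_f$ pulls back to the expected section. Both are formal consequences of the universal property of fibered products, so the verification is routine once the diagrams are written out.
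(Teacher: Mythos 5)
Your proposal is correct and follows the same route as the paper: the paper's proof simply invokes Proposition \ref{P:basechange1} together with the modular descriptions of $\JMn^{\E,\sigma}$ and $\overline J_f^{\E_f,\sigma_f}$, which is exactly the definition-unwinding and pullback-compatibility check you carry out in detail. The extra care you take with iterated base change of $(\E,\sigma)$ is implicit in the paper's one-line argument, so there is no substantive difference.
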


\begin{proof}
This is a direct consequence of Proposition \ref{P:basechange1} above along with the modular descriptions of $\JMn^{\E,\sigma}$ and of $\overline J_f^{\E_f,\sigma_f}$.
\end{proof}

\section{Universal compactified Jacobians and N\'eron models}


Let $\E$ be a polarization on $\Mn$ and $\sigma:\Mn\to\Mnx$ a section of the forgetful morphism $\Mnx\to \Mn$.
Denote by $\LMn^{\E,\sigma}$ the substack of $\JMn^{\E,\sigma}$ parametrizing invertible sheaves.

Let $(f:X\to T; \sigma_1,\dots, \sigma_n)$ be a family of marked curves. As in the previous section, denote by $\E_f$ and $\sigma_f$ the induced polarization and section, respectively, of $(\E,\sigma)$ on $f$ via pullback by the moduli morphism $\mu_f:T\to \Mn$.

Denote by $J_f^{\E_f,\sigma_f}$ the locus inside the Esteves compactified Jacobian $\ov J_{f}^{\E_f,\sigma_f}$ parametrizing line bundles.
Then the following is an immediate consequence of Proposition \ref{P:basechange2} above.

\begin{prop}\label{P:smoothbasechange}
Notation as above. Then $\mu_f^*(\LMn^{\E,\sigma})$ is representable by $J_f^{\E_f,\sigma_f}$.
\end{prop}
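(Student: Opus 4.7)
The plan is to deduce the statement directly from Proposition \ref{P:basechange2} together with the observation that being locally free is an open condition on flat families of simple torsion-free rank-$1$ sheaves, and that this open condition is compatible with the base-change identification already established.

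First I would recall the content of Proposition \ref{P:basechange2}: the isomorphism $\mu_f^*(\JMn^{\E,\sigma}) \cong \ov J_f^{\E_f,\sigma_f}$ is given by the universal property, sending a $Y$-point of the pullback stack (consisting of the pulled-back family $X_Y \to Y$, the induced sections, and a $(\E_f,\sigma_f)$-quasistable $Y$-flat simple torsion-free rank-$1$ sheaf $F$ on $X_Y$) to the classifying morphism $Y \to \ov J_f^{\E_f,\sigma_f}$ attached to $F$. Under this identification, the universal sheaf on $X \times_{\Mn} \JMn^{\E,\sigma}$ pulls back to the universal sheaf on $X \times_T \ov J_f^{\E_f,\sigma_f}$ (up to twist by a line bundle on the base, which is irrelevant for the question of being invertible).

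Next, I would observe that $\LMn^{\E,\sigma}$ is cut out inside $\JMn^{\E,\sigma}$ by the condition that the universal sheaf be invertible fiberwise; since the locus where a coherent sheaf flat over the base and of pure dimension one on the fibers is a line bundle is open, $\LMn^{\E,\sigma}$ is an open substack of $\JMn^{\E,\sigma}$. The same openness argument applied to the universal sheaf on $\ov J_f^{\E_f,\sigma_f}$ exhibits $J_f^{\E_f,\sigma_f}$ as an open subspace.

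The conclusion is then immediate: since the base-change isomorphism of Proposition \ref{P:basechange2} matches the two universal sheaves, it restricts to an isomorphism between the open loci where these sheaves are invertible, namely $\mu_f^*(\LMn^{\E,\sigma}) \cong J_f^{\E_f,\sigma_f}$. There is no serious obstacle here; the only point requiring care is that the identification of universal sheaves under the base change is up to a twist by a line bundle pulled back from the base, but such a twist does not affect whether the sheaf is invertible, so the two open loci match.
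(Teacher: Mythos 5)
Your proof is correct and follows the same route as the paper, which simply declares the statement an immediate consequence of Proposition \ref{P:basechange2}; you merely make explicit the (standard and correct) points that the invertible locus is open and that the line-bundle twist ambiguity of the universal sheaf is harmless.
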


We can now state the main result of the present paper.

\begin{thm}\label{T:main}
Let $B$ be a Dedekind scheme and $(f:X\to B;\sigma_1,\dots,\sigma_n)$ a family of marked curves with smooth total space $X$ and generic fiber $X_\eta$.
Then we have
$$N(J(X_\eta))\cong \mu_f^*(\LMn^{\E,\sigma})$$
where $\E$ is any polarization on $\Mn$ and $\sigma$ any section of $\Mnx\to\Mn$.
\end{thm}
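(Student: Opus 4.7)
The plan is essentially to chain together Proposition~\ref{P:smoothbasechange}, which gives a concrete identification of the pullback stack with the line-bundle locus of an Esteves compactified Jacobian of the family $f$, with Kass's Fact~\ref{F:kass}, which identifies that locus with the N\'eron model whenever the total space of $f$ is smooth. Everything that is non-formal in the statement is absorbed into these two inputs, so the task is really to check that the two fit together and that the hypotheses line up.

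First I would fix the induced data on the family. Using the moduli map $\mu_f:B\to\Mn$ and its lift $\overline{\mu}_f:X\to\Mnx$ coming from the cartesian square displayed after Definition~\ref{D:polstack}, set $\E_f:=\overline{\mu}_f^*\E$ and let $\sigma_f$ be the section of $f$ obtained by pulling back $\sigma$ along $\mu_f$. By the discussion surrounding Definition~\ref{D:polstack}, $\E_f$ is a relative polarization on $f$ in the sense of the paragraph before Fact~\ref{F:esteves}, and $\sigma_f$ is a section of $f$ through its $B$-smooth locus (this last property uses that $\sigma$ factors through the smooth locus of $\Mnx\to\Mn$, which is part of being a section of $\Mn$-valued points of the universal curve). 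Thus the pair $(\E_f,\sigma_f)$ is exactly the type of datum to which the Esteves construction and Fact~\ref{F:kass} apply.

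Second, I would apply Proposition~\ref{P:smoothbasechange} verbatim to identify
\[
\mu_f^*(\LMn^{\E,\sigma})\;\cong\;J_f^{\E_f,\sigma_f},
\]
i.e.\ with the locus of line bundles inside the Esteves compactified Jacobian $\ov J_f^{\E_f,\sigma_f}$ of $f$. At this point everything has been translated from the universal stack $\Mn$ into a question about a single family $f:X\to B$.

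Finally, since by hypothesis the total space $X$ is smooth over $k$, Fact~\ref{F:kass} applies directly to the family $f$ with the polarization $\E_f$ and the section $\sigma_f$, yielding
\[
J_f^{\E_f,\sigma_f}\;\cong\;N(J(X_\eta)).
\]
Composing the two isomorphisms gives the theorem. The only place where there is anything to check is the compatibility of the induced data $(\E_f,\sigma_f)$ with the hypotheses of Kass's theorem, in particular that $\sigma_f$ factors through the $B$-smooth locus of $X$; but this is guaranteed by the definition of $\Mn$-sections as families with sections through the smooth locus, together with the fact that base change preserves this condition. There is no genuine obstacle beyond this bookkeeping: the proof is a direct assembly of Proposition~\ref{P:smoothbasechange} and Fact~\ref{F:kass}, and the content of the theorem is precisely that these two results can be combined to remove any dependence on the choice of $\E$ and $\sigma$ from the resulting N\'eron model.
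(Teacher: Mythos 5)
Your proposal is correct and follows exactly the paper's own argument: the proof in the text is precisely the combination of Proposition~\ref{P:smoothbasechange} with Fact~\ref{F:kass}, and the extra bookkeeping you include about the induced data $(\E_f,\sigma_f)$ is just the content already established in the discussion after Definition~\ref{D:polstack}. Nothing further is needed.
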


\begin{proof}

The result follows directly by combining the statements of Proposition \ref{P:smoothbasechange} with Fact \ref{F:kass}.

\end{proof}

Some comments are due.

\begin{rem}
The role of the marked points on Theorem \ref{T:main} is to guarantee the existence of a section $\sigma:\Mn\to\Mnx$: the statement holds as it is if $n\geq 1$. In particular, we get universal N\'eron models for families of curves admitting a section.
\end{rem}

\begin{rem}
For $n=0$ and in the special case when $g.c.d.(d-g+1,2g-2)=1$ the result holds for families of Deligne-Mumford stable curves and for the so-called canonical polarization (in which case the restrictions to $\mgnb$ of the stacks $\JMn^{\E,ss}$, $\JMn^{\E,s}$ and $\JMn^{\E,\sigma}$ all coincide). However, in this situation, our result coincides with the one of Caporaso in \cite{capN}.  We will describe this result in detail in \ref{S:cap}.
\end{rem}

\begin{rem}
Since we get universal N\'eron models for families of reduced curves over Dedekind schemes whose total space is smooth, the curves are indeed forced to live in a smooth surface, hence its singularities are necessarily locally planar. Therefore, our results regarding N\'eron models could be restated by replacing the stack of reduced curves by theirs substack of locally planar curves. We chose to work with the stack of reduced curves so that the reader could have the most general statement regarding the existence of fine compactified universal Jacobian stacks, but in what concerns applications to N\'eron models it really would make no difference. In \ref{S:holmes} we will discuss the results by Holmes, which indeed include the case of families whose total space is not necessarily smooth, even though the singularities of the fibers are assumed to be nodal.
\end{rem}

\subsection{Relation with Caporaso's results}\label{S:cap}

Fix now $n=0$  and consider $\ov{\mathcal M}_g\subset \mathfrak M_g$ to be the  stack parametrizing Deligne-Mumford stable curves of genus $g$.
Let $\ov{\mathcal P}_{d,g}$ be the balanced Picard stack constructed by Caporaso in \cite{capN}.
The stack $\ov{\mathcal P}_{d,g}$ is a universal compactification of the degree $d$ Jacobian variety over $\ov{\mathcal M}_g$ and it is endowed with a morphism onto $\ov{\mathcal M}_g$ which is representable if and only if the fixed degree $d$ and genus $g$ satisfy the relation $g.c.d.(d-g+1,2g-2)=1$.

It is easy to see that, if this numerical condition is satisfied, $\ov{\mathcal P}_{d,g}$ coincides with the restriction of our compactification $\ov{\mathfrak J}_g^{\E,\sigma}$ to $\ov{\mathcal M}_g$ in the case when the polarization $\E$ is given by the canonical polarization $\E^{can}:=\omega^{\otimes(d-g+1)}\oplus \mathcal O_{\ov{\mathcal C}_{g,1}}^{\oplus 2g-3}$, where with $\omega$ we denote the relative dualizing bundle of the universal family morphism $\ov{\mathcal C}_{g}\to\ov{\mathcal M}_g$.

Indeed, in this special situation, our Theorem \ref{T:main} coincides with   in \cite{capN}. A generalization of this result for any $d$ but restricting to a special loci of $\ov{\mathcal M}_{g}$ (over which $\E^{can}$ is a non-degenerate polarization) was also given by the author in \cite{meloZ}. In fact, when $g.c.d.(d-g+1,2g-2)\neq 1$, there are curves for which $\E^{can}$ is degenerate, which implies that there are strictly semi-stable sheaves in the compactification. There are therefore further identifications in the GIT quotient, which ends up possibly not containing the N\'eron model.

\subsection{Relation with the work by Holmes and Chiodo}

\subsubsection{}\label{S:holmes}
In \cite{holmes}, the author considers the following problem: is it possible to construct a partial compactification of the universal Jacobian stack over $\mgnb$ which is separated, admits a group structure prolonging that on the Jacobian and yielding universal N\'eron models for families of Jacobians of curves?

While it seems to be impossible to get all such properties, the author takes a different approach, namely he proves that there exists a stack $N_{g,n}$ over a base-change $\widetilde{\mathcal M}_{g,n}$ of $\mgnb$ which is separated, admits a group structure and satisfies a universal property. This object gives, as in our case, universal N\'eron models for the universal Jacobian over stable marked curves  in the following sense.
For any $g$ and $n$ such that $2g-2+n>0$, let  $(f:X\to T;\sigma_1,\dots,\sigma_n)$ be a family of marked curves over a dedekind scheme such that $f$ is \textit{weakly transversal to the boundary}, which is a weaker condition then asking that the total space of the family is regular (see \cite[Sec. 12]{holmes}). Then the modular map $\mu_f: T\to \mgnb$ of the family of curves  factors through $\widetilde{\mathcal M}_{g,n}$, and the pullback of $N_{g,n}$ over such map is the N\'eron model of $f$.
This means that Holmes' partial compactification can be seen as a universal N\'eron model for families of Jacobians, but it is has the drawback of existing only over a base-change of $\mgnb$.

\subsubsection{}\label{S:chiodo}
A similar approach has been taken by Chiodo in \cite{chiodo}. In this case, the author works over the Abramovich-Vistoli moduli stack of $l$-twisted curves $\mgb^l$.
Over this stack, Chiodo constructs a group scheme $\Pic_g^{0,l}$ extending the pullback of the universal Jacobian. As in the case of Holmes, this group scheme also yields universal N\'eron models for Jacobians  in the following sense. Consider a family of curves $f:X\to \spec R$, over the spectrum of a discrete valuation ring $R$ such that the total space of the family $X$ is smooth. Then, after extracting  the modular map $\mu_f:\spec R \to\mgb$ lifts to a morphism $\spec R_l\to \mgb^l$, when $R_l$ is the discrete valuation ring obtained by extracting an $l$th root from the uniformizer of $R$ and the pullback of $\Pic_g^{0,l}$ over this map is again isomorphic to the N\'eron model of the base-change of the original family to $\spec R_l$.

For both Holmes and Chiodo's contructions, a natural question one might ask is which is the relation between the universal N\'eron model obtained in each case and the pullback of our universal compactified Jacobian stack $\JM^{\E,\sigma}$ to $\widetilde{\mathcal M}_{g,n}$ and $\mgb^l$, respectively.
In this direction, the author was informed by David Holmes that his PhD student Giulio Orecchia has shown that given an aligned family of nodal curves $X\to T$ (see \cite{holmes} for the definition of aligned families) which is relatively smooth over a dense open subset $U\subseteq T$ and with total space $X$ regular, then the line bundle locus of Esteves' compactified jacobian is exactly the N\'eron model of the relative Jacobian of $X_U\to U$.
As a consequence, any universal compactified Jacobian $\ov{\mathfrak J}_{\widetilde C_{g,n}}^{\E,\sigma}$ of a minimal desingularization $\widetilde C_{g,n}$ of the pulback of the universal curve $\overline{\mathcal C}_{g,n}$ over $\widetilde{\mathcal M}_{g,n}$ yields a relative compactification of the universal N\'eron model over $\widetilde{\mathcal M}_{g,n}$.

\section*{Acknowledgements}

The author wishes to thank Alessandro Chiodo, David Holmes and specially Jesse Kass for reading and commenting on a preliminary version of the paper.  The author also thanks the referee for his/her careful reading of the paper and for the suggestions he/she presented.

This work was funded by a Rita Levi Montalcini Grant, funded by the Italian government through MIUR.
The author is a member of CMUC (Center for Mathematics of the University of Coimbra) -- UID/MAT/00324/2013, funded by the Portuguese Government through FCT/MEC and co-funded by the European Regional Development Fund through the Partnership Agreement PT2020.

\end{document}